\def\div{\mathrm{div} \,}
\def\setcolon{{\hspace{1ex}:\hspace{1ex}}}
\def\Grid{\mathcal{G}}
\def\Early{\mathcal{E}}
\def\Late{\mathcal{L}}
\def\R{\mathbb R}
\def\Z{\mathbb Z}
\def\PP{\mathbb P}
\def\EE{\mathbb E}
\def\one{\mathbf{1}}
\def\B{\mathbf{B}} %discrete ball
\def\O{\Omega}
\def\disp{\displaystyle}
\newcommand{\floor}[1]{\lfloor {#1} \rfloor}
\newcommand{\ceil}[1]{\lceil {#1} \rceil}
\newcommand{\de}{\partial}
\newcommand{\old}[1]{}
\newcommand{\arxiv}[1]{{\tt \href{http://arxiv.org/abs/#1}{arXiv:#1}}}
\newtheorem{theorem}{Theorem}
\newtheorem{lemma}[theorem]{Lemma}
\newtheorem{corollary}[theorem]{Corollary}
\theoremstyle{remark}
\theoremstyle{definition}
\title{Internal DLA in Higher Dimensions}
\author{David Jerison \and Lionel Levine\footnote{Supported by an NSF Postdoctoral Research Fellowship.} \and Scott Sheffield\footnote{Partially supported by NSF grant
DMS-0645585.}}
\date{January 31, 2011}
\begin{document}

\maketitle

\begin{abstract}
Let $A(t)$ denote the cluster produced by internal diffusion limited aggregation (internal DLA) with $t$ particles in dimension $d \geq 3$.  We show that $A(t)$ is approximately spherical, up to an $O(\sqrt{\log t})$ error.

\end{abstract}

In the process known as internal diffusion limited aggregation (internal DLA)
one constructs for each integer
time $t \geq 0$ an {\bf occupied set} $A(t) \subset \mathbb Z^d$ as follows:
begin with $A(0) = \emptyset$ and $A(1) = \{0\}$.  Then, for each integer $t > 1$,
form $A(t+1)$ by adding to $A(t)$ the first point at which a simple random walk from the origin
hits $\Z^d \setminus A(t)$.
%Let $d \geq 3$, and let $A(t)$ be the internal DLA cluster of $t$ occupied sites grown from the origin in $\Z^d$.
Let $B_r \subset \R^d$ denote the ball of radius $r$ centered at $0$, and write $\B_r := B_r \cap \Z^d$.  Let $\omega_d$ be the volume of the unit ball in $\R^d$.
Our main result is the following.

\begin{theorem}\label{thm:logfluctuations}
Fix an integer $d \geq 3$.  For each $\gamma$ there exists an $a = a(\gamma,d) < \infty$
such that for all sufficiently large $r$,
\[
\PP \left\{ \B_{r - a \sqrt{\log r}} \subset A(\omega_d r^d) \subset \B_{r+ a \sqrt{\log r}} \right\}^c
\le r^{-\gamma}. \]
\end{theorem}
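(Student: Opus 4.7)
Adapting the authors' two-dimensional framework, I would test the IDLA cluster against a rich family of discrete harmonic functions whose martingale fluctuations translate into shape bounds. By a union bound over the $O(r^d)$ lattice points near the sphere of radius $r$, it suffices to show that for each candidate $z$ with $\bigl| |z| - r \bigr| \geq a\sqrt{\log r}$, the probability of $z$ lying on the wrong side of $\de A(\omega_d r^d)$ is at most $r^{-\gamma - d}$; the outer and inner bounds are dual and can be treated in parallel.

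\textbf{Martingale setup.} Enumerate the driving walks $X^1, X^2, \ldots$, let $Y_i$ denote the first exit of $X^i$ from $A(i-1)$, and fix any $h$ that is discrete harmonic on a domain containing the eventual cluster. Then optional stopping for each walk gives that
\[
M_t(h) := \sum_{i=1}^t \bigl( h(Y_i) - h(0) \bigr)
\]
is a martingale in $t$. For the outer bound I would take $h = h_z$ a discrete approximation to the Newtonian potential $|x-z|^{2-d}$ (equivalently the discrete Green's function on $\Z^d$ translated to $z$), which is large precisely when the exit $Y_i$ is close to $z$. For the inner bound I would use a family of low-degree discrete harmonic polynomials whose linear combinations on $\de B_r$ approximate indicators of spherical caps of angular width $r^{-1}\sqrt{\log r}$.

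\textbf{From martingales to shape.} The event $z \in A(\omega_d r^d)$ with $|z| - r \gg \sqrt{\log r}$ forces some exit $Y_i$ to lie near $z$, yielding an anomalous contribution to $M_t(h_z)$; the inner-bound event similarly forces the harmonic-polynomial test statistic to drift. Applying a Freedman-type exponential martingale inequality, with the predictable quadratic variation $\langle M_t(h_z)\rangle$ bounded by integrating $h_z^2$ against harmonic measure on the (nearly spherical) $\de A(i-1)$, gives Gaussian tails of the form $\exp(-c s^2)$ for a forced deviation of size $s$. The crucial gain in $d \geq 3$ is that $|x-z|^{2-d}$ is \emph{bounded} on the sphere of radius $r$, so $\langle M\rangle_t$ remains $O(1)$ per particle and a deviation $s \asymp \sqrt{\log r}$ already yields the required $r^{-\gamma - d}$ tail; in $d=2$, the logarithmic test functions make $\langle M\rangle_t$ much larger, accounting for the weaker $O(\log r)$ shape bound obtained previously.

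\textbf{Main obstacle.} The central difficulty is circularity: the concentration estimate for $M_t(h)$ relies on a priori control of the harmonic measure on $\de A(i-1)$, which is precisely what we are trying to prove. I would expect to handle this by bootstrapping, starting from a crude polynomial-fluctuation bound of Lawler--Bramson--Griffeath type and iteratively feeding back into the martingale analysis to sharpen the error scale down to $\sqrt{\log r}$. A secondary technical hurdle is the lattice-scale construction of $h_z$ near its singularity at $z$, where the discrete-continuous comparison errors must be controlled well below the target fluctuation scale; transience in $d \geq 3$ is a genuine asset here, since the relevant discrete harmonic functions stay close to their continuum counterparts away from $z$.
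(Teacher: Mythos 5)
Your high-level framework (martingales built from discrete harmonic test functions, quadratic variation controlled by exit times, Gaussian tails, union bound over lattice points, and the observation that transience makes the per-particle variance summable so that $\langle M\rangle = O(1)$ in $d\ge 3$) is indeed the skeleton of the paper's argument. But there are two genuine gaps, the first of which is fatal to the outer bound as you have set it up.

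First, the detection mechanism for an early point is missing. You note, correctly, that in $d\ge 3$ the test function $h_z \approx |x-z|^{2-d}$ is \emph{bounded}; but this cuts both ways. If $z$ joins the cluster $m$ lattice units outside the sphere, the single term $h_z(z)-h_z(0)$ it contributes to $M_t(h_z)$ is $O(1)$, and the bulk of the filled ball contributes $O(1)$ by the (approximate) mean value property. So the "anomalous contribution" you invoke is of constant size, to be compared against a quadratic variation of constant size: this yields only a constant-probability bound, nowhere near $r^{-\gamma-d}$, no matter how large $a$ is. The paper's fix has two coupled ingredients you do not have: (i) the thin tentacle estimate (Lemma~A of [JLS10]), which says that except on an event of probability $e^{-cm^2}$, an $m$-early point at $z$ comes with order $m^d$ occupied sites in $\B(z,m)$, each contributing at least $c/m^{d-1}$ to the martingale (here the pole of the test function is placed at $y=(r+2m)z/r$, beyond $z$), so the forced deviation is of size $m$, not $O(1)$; and (ii) an exponential moment bound on $S(T_1)$ at scale $\lambda = cm^2$, exploiting that the quadratic variation is not merely $O(1)$ but $O(m^{-(d-2)})$ because the test function is small at distance $\ge m$ from its pole. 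Together these give $e^{-cm^2}$, which is $r^{-\gamma-d}$ for $m \asymp \sqrt{\log r}$. Without the tentacle lemma your outer bound does not close.

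Second, the circularity is not resolved by a one-shot bootstrap from Lawler--Bramson--Griffeath, and your treatment of inner and outer errors as "dual and parallel" misses the actual logical structure. The paper's two lemmas are conditional on each other: one bounds $\PP(\Early_m[T]\cap\Late_\ell[T]^c)$ (early points are detected \emph{provided} there are no late points at a smaller scale, since the late-point hypothesis is what controls both the bulk term and the number of sites outside the filled ball), and the other bounds $\PP(\Early_m[T]^c\cap\Late_\ell[T])$ (a late point forces many particles to be stopped on an auxiliary sphere, detected \emph{provided} there are no early points, which is what controls the quadratic variation). These are then iterated, starting from the trivial bound $m_0=T$ (no LBG input needed), with the scales contracting via $\ell_{j+1}\asymp((\log T)\,m_j)^{1/3}$ down to $\sqrt{\log T}$ after $\log T$ rounds. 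Your proposed inner-bound mechanism (harmonic polynomials approximating indicators of spherical caps) is also not what is needed: the paper detects a late point $y$ by using the hitting probability of $y$ inside a ball of radius $|y|+a\ell$ and counting the $\gtrsim \ell|y|^{d-1}$ particles forced to stop on that ball's boundary, each contributing $-P(0)\approx -k/|y|^{d-1}$, driving the martingale below $-k^2$. You would need to supply a comparably quantitative mechanism before the inner half of your argument could be assessed.
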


We treated the case $d=2$ in \cite{JLS} (see also the overview in \cite{JLS0}), where we obtained a similar statement with $\log r$ in place of $\sqrt{\log r}$.  Together with a Borel-Cantelli argument, this in particular implies the following \cite{JLS}:

\begin{corollary} \label{cor:bound}
The maximal distance from $\partial B_r$ to a point in one (but not both) of $\B_r$ and $A(\omega_d r^d)$ is a.s.\ $O(\log r)$ when $d=2$ and $O(\sqrt{\log r})$ when $d > 2$.
\end{corollary}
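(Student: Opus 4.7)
The plan is a routine application of the Borel--Cantelli lemma to Theorem~\ref{thm:logfluctuations}, combined with the monotonicity $A(s) \subset A(t)$ for $s \leq t$ to interpolate between integer radii. The $d=2$ statement follows identically from the analog in \cite{JLS}, so I focus on $d \geq 3$.

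First I would fix some $\gamma > 1$, say $\gamma = 2$, and let $a = a(\gamma,d)$ be the constant supplied by Theorem~\ref{thm:logfluctuations}. For each positive integer $n$ define $E_n$ to be the event
\[
\B_{n - a\sqrt{\log n}} \subset A(\omega_d n^d) \subset \B_{n + a\sqrt{\log n}}.
\]
The theorem gives $\PP(E_n^c) \leq n^{-\gamma}$ for $n$ sufficiently large, and these probabilities are summable, so the Borel--Cantelli lemma produces an almost surely finite random $N$ such that $E_n$ holds for every $n \geq N$.

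To pass from integer to arbitrary $r \geq N+1$, I would set $n = \lfloor r \rfloor$ and use
\[
A(\omega_d n^d) \subset A(\omega_d r^d) \subset A(\omega_d (n+1)^d)
\]
together with $E_n$ and $E_{n+1}$ to sandwich
\[
\B_{n - a\sqrt{\log n}} \subset A(\omega_d r^d) \subset \B_{n+1 + a\sqrt{\log(n+1)}}.
\]
Since $|r - n| \leq 1$ and $|(n+1) - r| \leq 1$, any lattice point belonging to exactly one of $\B_r$ and $A(\omega_d r^d)$ lies within distance $1 + a\sqrt{\log(r+1)} = O(\sqrt{\log r})$ of $\partial B_r$, which is the claimed bound.

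There is no genuine obstacle here: Theorem~\ref{thm:logfluctuations} is doing all the work. The only point to be careful about is that the constant $a(\gamma,d)$ depends on $\gamma$, so one must commit to a single $\gamma > 1$ (to get summability of $n^{-\gamma}$) before invoking Borel--Cantelli; the resulting random constant in the $O(\sqrt{\log r})$ bound then depends on the realization, as must be the case for an almost sure statement.
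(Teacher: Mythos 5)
Your proposal is correct and is exactly the argument the paper intends: the corollary is stated as following from Theorem~\ref{thm:logfluctuations} ``together with a Borel--Cantelli argument'' (with details deferred to \cite{JLS}), and your choice of a fixed summable exponent, application of Borel--Cantelli along integer radii, and interpolation via the monotonicity of $t \mapsto A(t)$ is the standard way to fill that in. No gaps.
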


These results show that internal DLA in dimensions $d\geq 3$ is extremely close to a perfect sphere: when the cluster $A(t)$ has the same size as a ball of radius $r$, its fluctuations around that ball are confined to the $\sqrt{\log r}$ scale (versus $\log r$ in dimension $2$).
%, as opposed to the $\log r$ scale established for dimension~$2$ in \cite{JLS}.

In~\cite{JLS} we explained that our method for $d=2$ would also apply in dimensions $d \geq 3$ with the $\log r$ replaced by $\sqrt{\log r}$.  We outlined the changes needed in higher dimensions (stating that the full proof would follow in this paper) and included a key step: Lemma~A, which bounds the probability of ``thin tentacles'' in the internal DLA cluster in all dimensions.  The purpose of this note is to carry out the adaptation of the $d=2$ argument of~\cite{JLS} to higher dimensions.
%of Theorem~\ref{thm:logfluctuations}
We remark that in \cite{JLS} we used an estimate from \cite{LBG} to start this iteration, while here we have modified the argument slightly so that this a priori estimate is no longer required.

One way for $A(\omega_d r^d)$ to deviate from the radius $r$ sphere is for it to have a single ``tentacle'' extending beyond the sphere.  The thin tentacle estimate \cite[Lemma~A]{JLS} essentially says that in dimensions $d \geq 3$, the probability that there is a tentacle of length $m$ and volume less than a small constant times $m^d$ (near a given location) is at most $e^{-cm^2}$.  By summing over all locations, one may use this to show that the length of the longest ``thin tentacle'' produced before time $t$ is $O(\sqrt{\log t})$.  To complete the proof of Theorem \ref{thm:logfluctuations}, we will have to show that other types of deviations from the radius $r$ sphere are also unlikely.

Lemma A of \cite{JLS} was also proved for $d=2$, albeit with $e^{-cm^2}$ replaced by $e^{-cm^2/\log m}$.  However, when $d=2$ there appear to be other more ``global'' fluctuations that swamp those produced by individual tentacles. (Indeed, we expect, but did not prove, that the $\log r$ fluctuation bound is tight when $d=2$.)  We bound these other fluctuations in higher dimensions via the same scheme introduced in~\cite{JLS0, JLS}, which involves constructing and estimating certain martingales related to the growth of $A(t)$.  It turns out the quadratic variations of these martingales are, with high probability, of order $\log t$ when $d = 2$ and of constant order when $d \geq 3$, closely paralleling what one obtains for the discrete Gaussian free field (as outlined in more detail in \cite{JLS}).  The connection to the Gaussian free field is made more explicit in \cite{idla-gff}.

%In dimension~$2$,
%%thin tentacles are slightly less unlikely ($e^{-cm^2/\log m}$) but
%the fluctuations are instead dominated by a global effect: the quadratic variation time change $S_\zeta(t)$ in our martingale $M_\zeta(t)$ measuring deviations from circularity has order %$\log t$ in dimension~$2$, whereas it has constant order in dimensions~$3$ and higher.

% in terms of the radius of the ball and the distance $k$ from $y$ to the boundary of the ball.
Section~\ref{sec:maintheorem} proves Theorem~\ref{thm:logfluctuations}
by iteratively applying higher dimensional analogues of the two main
lemmas of \cite{JLS}.  The lemmas themselves are proved in
Section~\ref{sec:earlylate}, which is the heart of the argument.
Section \ref{sec:green} contains preliminary estimates about random walks
%, including upper and lower bounds on the probability that Brownian motion on the cubic grid in $\Z^d$ hits a point $y \in \Z^d$ before exiting a %ball
that are used in Section~\ref{sec:earlylate}.

\subsection*{A brief history of internal DLA fluctuation bounds} The history of fluctuation bounds such as the
one in Corollary~\ref{cor:bound} is as follows.  In 1991, Lawler,
Bramson, and Griffeath proved that the limit shape of internal DLA
from a point is the ball in all dimensions \cite{LBG}.  In 1995 Lawler
gave a more quantitative proof, showing that the fluctuations of $A(\omega_d r^d)$ from the
ball of radius~$r$ are at most of order $O(r^{1/3} \log^4 r)$ \cite{Lawler95}.
In December 2009, the present authors announced the bound $O(\log r)$ on fluctuations in dimension $d=2$ \cite{JLS0} and gave an overview of the argument, making clear that the details remained to be written.
In April 2010, Asselah and Gaudilli\`ere \cite{AGa}
gave a proof, using different methods from \cite{JLS0}, of the bound
$O(r^{1/(d+1)})$ in all dimensions, improving the Lawler bound for all
$d\ge 3$.  In September 2010, Asselah and Gaudilli\`ere improved this to
$O((\log r)^2)$ in all dimensions $d \geq 2$ with an $O(\log r)$ bound on ``inner'' errors \cite{AGb}.
In October 2010 the present authors proved the $O(\log r)$ bounds (announced in December 2009) for
dimension $d=2$
%(announced earlier in~\cite{JLS0})
and outlined the proof of the $O(\sqrt{\log r})$ bound
for dimensions $d\ge 3$ \cite{JLS}.  In November 2010, Asselah and Gaudilli\`ere
gave a second proof of the $O(\sqrt{\log r})$ bound \cite{AGc}.  Their proof uses
methods from \cite{AGb} along with Lemma~A of \cite{JLS}
to bound ``outer'' errors and a new large deviation bound (in some
sense symmetric to Lemma~A) to bound ``inner'' errors.

More references and a more general discussion of internal DLA history appear in \cite{JLS}.

\section{Proof of Theorem \ref{thm:logfluctuations}}
\label{sec:maintheorem}

Let $m$ and $\ell$ be positive real numbers.  We say that $x \in \Z^d$ is $m$-early if \[ x \in A(\omega_d (|x|-m)^d), \]
where $\omega_d$ is the volume of the unit ball in $\R^d$.  Likewise, we say that $x$ is $\ell$-late if  \[ x \notin A(\omega_d (|x|+\ell)^d). \]
Let $\Early_m[T]$ be the event that some point of $A(T)$ is $m$-early.  Let $\Late_\ell[T]$ be the event that some point of $\B_{(T/\omega_d)^{1/d}-\ell}$ is $\ell$-late.  These events correspond to ``outer'' and ``inner'' deviations of $A(T)$ from circularity.

\begin{lemma}\label{earlyimplieslate}
{\em (Early points imply late points)} Fix a dimension $d \geq 3$.  For each $\gamma \geq 1$, there is a constant $C_0=C_0(\gamma,d)$, such that for all sufficiently large $T$, if~$m \ge C_0 \sqrt{\log T}$ and $\ell \le m/C_0$, then
\[
\PP(\Early_m[T] \cap \Late_\ell [T]^c) < T^{-10\gamma}.
\]
\end{lemma}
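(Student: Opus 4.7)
I plan to adapt the martingale scheme used in \cite{JLS} for $d=2$. The key gain in $d \geq 3$ comes from the faster decay of the Green's function ($g(y) \asymp |y|^{2-d}$), which forces the relevant martingales to have quadratic variations bounded by a constant rather than growing with $T$; this is precisely the source of the $\sqrt{\log T}$ vs.\ $\log T$ improvement.

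\textbf{Reduction and martingale.} By a union bound over the polynomially many candidate early points $x \in \Z^d$ of interest (those with $|x| \leq 2(T/\omega_d)^{1/d}$), it suffices to show that for each fixed $x$, setting $r := |x|-m$ and $T_x := \omega_d r^d$, the probability that $x \in A(T_x)$ while no $\ell$-late point exists at time $T$ is at most a sufficiently large negative power of $T$. Using the abelian property, realize $A(t)$ by stacking independent random walks $S^{(1)}, S^{(2)}, \dots$ from $0$, each stopped on exit from the previously built cluster. Choose a test function $\phi = \phi_x$ that is essentially a discrete harmonic ``Green's-function bump'' concentrated near $x$ inside the ball $\B_{|x|}$, normalized so that $\phi(x)$ is of order $m$ while $\phi(y) = O(1)$ for $y \in \B_r$. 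Form the compensated sum
\[
M_t \;=\; \sum_{y \in A(t)} \phi(y) \;-\; \sum_{i=1}^{t} \EE\!\left[\phi\bigl(S^{(i)}_{\tau_i}\bigr) \,\Big|\, \mathcal{F}_{i-1}\right],
\]
which is a bounded-increment martingale in $t$.

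\textbf{Bounds and concentration.} On the event that $x$ is $m$-early with no $\ell$-late points, the inclusion $x \in A(T_x)$ contributes $\phi(x) \gtrsim m$ to $\sum \phi(y)$, and the no-$\ell$-late hypothesis at time $T$ (combined with $A(T_x) \subseteq A(T)$ and a comparison argument controlling the depth of holes in $\B_r$ at time $T_x$) pins the bulk contribution close to its compensator value, yielding a net excess $M_{T_x} \gtrsim m$. For the quadratic variation, Green's-function decay gives $\sum_z \phi(z)^2 = O(1)$ in $d \geq 3$, from which a direct estimate shows the predictable quadratic variation of $M$ at time $T_x$ is $O(1)$. Freedman's inequality then yields $\PP(M_{T_x} \geq cm) \leq \exp(-c' m^2)$; since $m \geq C_0\sqrt{\log T}$, choosing $C_0 = C_0(\gamma,d)$ large enough makes this at most $T^{-11\gamma}$, which survives the union bound over $x$ and gives the claimed $T^{-10\gamma}$.

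\textbf{Main obstacle.} The delicate step is translating the hypothesis $\Late_\ell[T]^c$, which is about time $T \geq T_x$, into a concrete lower bound on $M_{T_x}$, which concerns the earlier time $T_x$. The argument will use that any deep hole in $\B_r$ at time $T_x$ would need to be filled between times $T_x$ and $T$ by walkers that would otherwise extend the cluster outward, which, given the conservation of particle count and the hypothesized absence of $\ell$-late points, is only possible in a controlled amount. Selecting $\phi$ so that it is insensitive to small hole patterns in the bulk while remaining sharply peaked at $x$ is what makes this conversion quantitative and preserves the constant-order quadratic variation; this is the place where the choice of test function and the geometry of the IDLA abelian stack need to be carefully coordinated.
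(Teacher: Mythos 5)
Your overall scheme (a harmonic test function summed over the cluster, made into a martingale, plus concentration via the quadratic variation) is the right one, but two linked steps fail as stated, and they are exactly where the work in this lemma lies. The first is the normalization $\phi(x)\sim m$. If a single lattice site carries weight of order $m$, then on the very event you are trying to rule out (the site $x$ becomes occupied) your martingale has one increment of size $\sim m$. Freedman's inequality $\PP(M\ge a)\le \exp(-a^2/(2V+2Ra/3))$ with target $a\asymp m$ and maximal increment $R\asymp m$ then gives only a constant bound, no matter how small $V$ is; you cannot extract $e^{-c'm^2}$ from a martingale whose increments are as large as the deviation you are bounding. Relatedly, the claim $\sum_z\phi(z)^2=O(1)$ is false: it is at least $\phi(x)^2\asymp m^2$, and even with an $O(1)$ normalization at the pole the sum $\sum_z |z-x|^{2(2-d)}$ diverges for $d=3,4$. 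The quantity that actually is $O(1)$ is $\sum_n a_n b_n$, where $a_n=P(0)$ and $b_n$ is the maximum of the harmonic function over the region the cluster can occupy at step $n$; and even that bound is only available after restricting to the event that there are no $(m+1)$-early points, so that the cluster stays at distance $\ge m$ from the pole (the paper handles this with an exponential moment at scale $\lambda=cm^2$).

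The missing idea is how the order-$m$ signal is produced without violating the increment bound. The paper normalizes $P$ to equal $1$ at a pole $y$ placed at distance $2m$ \emph{beyond} the candidate early point $z$, so that $P\le C/m^{d-2}$ on the entire accessible region; the contribution of order $m$ then comes not from the single site $z$ but from the aggregate of $\sim m^d$ occupied sites in $B(z,m)$, each contributing $\gtrsim m^{1-d}$ by Lemma~\ref{lowerbound}(b). To know that an early point really is accompanied by $\sim m^d$ occupied neighbors — rather than sitting at the tip of a thin tentacle — one must invoke the thin tentacle estimate, Lemma~A of \cite{JLS}, which contributes the $e^{-cm^2}$ error term and which your argument never uses; without it the aggregate lower bound on $M$ simply fails. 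Finally, your treatment of $\Late_\ell[T]^c$ is more roundabout than needed: the definition of $\ell$-late directly implies that the ball of radius $(t/\omega_d)^{1/d}-\ell$ is already filled at time $t$ itself, and the exact mean value property (Lemmas~\ref{lem:sandpile} and~\ref{meanvalue}) shows the filled ball contributes $O(1)$ to $M$, with the remaining $\le \ell r^{d-1}$ sites contributing at least $-\ell\ge -m/C_0$; no accounting of particles filling holes between $T_x$ and $T$ is required.
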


\begin{lemma}\label{lateimpliesearly}
{\em (Late points imply early points)}
Fix a dimension $d \geq 3$.  For each $\gamma \geq 1$, there is a constant $C_1=C_1(\gamma, d)$ such that for all sufficiently large~$T$, if $m \ge \ell \ge C_1 \sqrt{\log T}$ and $\ell \ge C_1((\log T)m)^{1/3}$, then
\[
\PP(\Early_m[T]^c \cap \Late_\ell [T]) \le T^{-10\gamma}.
\]
\end{lemma}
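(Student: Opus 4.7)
\emph{Setup.} Fix a candidate late point $z \in \B_{(T/\omega_d)^{1/d}-\ell}$ and set $T_z := \omega_d(|z|+\ell)^d \leq T$. The event $\{z \text{ is } \ell\text{-late}\}$ coincides with $\{z \notin A(T_z)\}$, and since there are at most $O(T)$ candidate points a union bound reduces matters to showing $\PP(\Early_m[T]^c \cap \{z \notin A(T_z)\}) \leq T^{-10\gamma-1}$ for each fixed $z$. Under $\Early_m[T]^c$ we have the a priori confinement $A(s) \subset \B_{(s/\omega_d)^{1/d}+m}$ for every $s \leq T_z$.

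\emph{Key martingale.} Following the scheme of \cite{JLS0, JLS}, introduce the martingale
\[
M^z_i := \sum_{j=1}^i \bigl[G(L_j,z)+u_j(z)-G(0,z)\bigr],
\]
where $G$ is the (finite, for $d\geq 3$) discrete Green's function of $\Z^d$, $L_j$ is the absorption site of the $j$-th walk, and $u_j(z)$ is its visit count at $z$; the martingale property follows from optional stopping applied to $G(S_n,z)+\#\{k<n:S_k=z\}$. Rearranging at $i = T_z$,
\[
u_{T_z}(z) = T_z G(0,z) - \sum_{w \in A(T_z)} G(w,z) + M^z_{T_z}.
\]
An elementary continuum Green's-function calculation gives $T_z G(0,z) - \sum_{w\in\B_{|z|+\ell}} G(w,z) = c_d \ell^2 + O(\ell^3/|z|)$, the expected odometer at $z$ for an ideal ball cluster. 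Hence on the late event $\{u_{T_z}(z)=0\}$,
\[
M^z_{T_z} = -c_d \ell^2 + \sum_{w \in E} G(w,z) - \sum_{w \in S} G(w,z),
\]
where $S := \B_{|z|+\ell}\setminus A(T_z)$ and $E := A(T_z)\setminus\B_{|z|+\ell}$. The ``cluster correction'' $\sum_E G - \sum_S G$ will be shown to be at most $c_d \ell^2/2$ using $\Early_m$ together with Lemma~A of \cite{JLS}, forcing $|M^z_{T_z}| \geq c_d \ell^2/2$ on the event in question.

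\emph{Concentration and obstacle.} Using the confinement, the predictable quadratic variation is bounded by $\langle M^z \rangle_{T_z} \lesssim m\ell$: per-walk conditional variances are controlled via gradient estimates for $G(\cdot,z)$, and only the $O(m R^{d-1})$ walks whose cluster boundary passes within $O(m)$ of $z$ contribute non-negligibly. Freedman's martingale inequality then yields $\PP(|M^z_{T_z}| \geq c_d \ell^2,\ \langle M^z\rangle_{T_z} \leq C m\ell) \leq \exp(-c \ell^3/m)$, which is at most $T^{-10\gamma-1}$ under the hypothesis $\ell^3 \geq C_1^3 (\log T) m$. The hypothesis $\ell \geq C_1\sqrt{\log T}$ ensures $\ell^2 \geq C_1^2 \log T$, needed to absorb an $O(\log T)$-scale discretization error in the cluster-correction bound. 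The principal technical obstacle is this cluster-correction estimate: a naive bound using only annulus volume and Green's-function decay gives $|E|/\ell^{d-2} \sim R^{d-1} m/\ell^{d-2}$, far larger than $\ell^2$ in the relevant regime. The sharp bound exploits the mass balance $|S| \approx |E|$ (up to $O(R^{d-1})$) together with the spatial separation of $S$ (inside $\partial\B_{|z|+\ell}$) and $E$ (outside), producing the needed cancellation in the $G$-weighted sums. This is where the random-walk estimates of Section~\ref{sec:green} are used, and where the thin-tentacle Lemma~A of \cite{JLS} rules out pathological configurations that would concentrate $E$ near $z$.
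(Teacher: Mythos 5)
Your overall architecture --- a site-indexed martingale forced to be macroscopically displaced on the late event, a quadratic-variation bound of order $m\ell$, and an exponential inequality yielding $\exp(-c\ell^3/m)$, which is exactly where the hypothesis $\ell \ge C_1((\log T)m)^{1/3}$ enters --- matches the paper's, and your union bound and concentration step are in order. But you build the martingale from the whole-space Green's function $G(\cdot,z)$ applied to the unstopped cluster, whereas the paper uses the hitting probability $P=P_{y,k}$ of $y$ before exiting $B_{s+k}$ (with $k=a\ell$) applied to the \emph{stopped} aggregation $A_{y,k}$ in which particles freeze on $\partial B_{s+k}$. This difference is not cosmetic: it is precisely what makes the step you yourself flag as the ``principal technical obstacle'' go through, and in your version that step fails.

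Concretely, the claimed bound $\sum_{w\in E}G(w,z)-\sum_{w\in S}G(w,z)\le c_d\ell^2/2$ on $\Early_m[T]^c$ is false in the regime $m\gg\ell$ that the lemma must cover (e.g.\ $m$ of order $\ell^3/\log T$). Let $R=|z|+\ell$ and let $w_0$ be the radial projection of $z$ onto $\partial B_R$, and consider a configuration in which $A(T_z)$ contains all of $\B(w_0,m)\setminus\B_R$, roughly $m^d/2$ sites. Every such site $x$ satisfies $|x|\le R+m$, so none of them need be $m$-early, and this protrusion is fat (volume comparable to $m^d$ for diameter $m$), so Lemma~A of \cite{JLS} does not exclude it. Its contribution to $\sum_E G(\cdot,z)$ is of order $\int_\ell^{m}\rho^{2-d}\,\rho^{d-1}\,d\rho\asymp m^2$, which swamps $c_d\ell^2$; the mass balance with $S$ gives no cancellation because the matching deficit can sit far from $z$, where $G(\cdot,z)\asymp R^{2-d}$ is negligible. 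On such configurations your identity gives $M^z_{T_z}=-c_d\ell^2+O(m^2)$, which need not be negative, and the Freedman step has nothing to act on. Ruling these configurations out probabilistically is essentially an outer-fluctuation bound at scales between $\ell$ and $m$, i.e.\ close to circular. The paper kills this term at the source: the late event forces at least $\approx\ell s^{d-1}$ particles out of $\B_{s+k}$, each stopped where the harmonic function vanishes and hence contributing $-P(0)\approx -k/s^{d-1}$, for a total of order $-k\ell=-a\ell^2$, while the positive contribution from inside $\B_{s+k}$ is capped at $Ck^2=Ca^2\ell^2$ by the exact mean value property (Lemma~\ref{meanvalue}(c)); taking $a$ small makes the negative term win deterministically. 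A secondary point: your expansion $c_d\ell^2+O(\ell^3/|z|)$ degenerates when $|z|\lesssim\ell$, which is why the paper treats $s\le 2k$ as a separate case.
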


\begin {figure}[H]
\begin {center}
\includegraphics [width=5in]{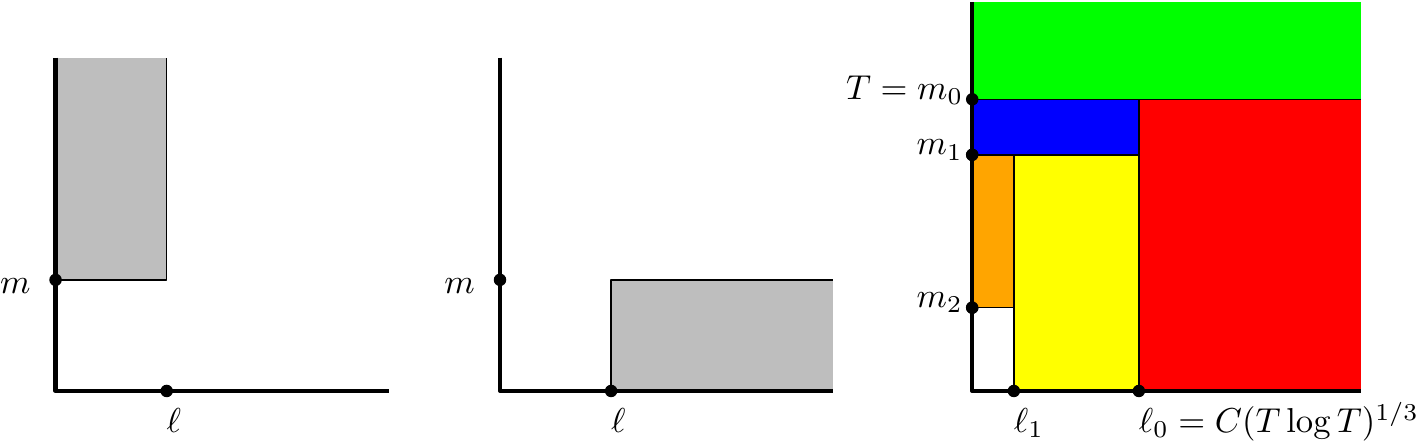}
\caption {\label{mlrectangles}  Let $m^T$ be the smallest $m'$ for which $A(T)$ contains an $m'$ early point.   Let $l^T$ be the largest $\ell'$ for which some point of $B_{(T/\omega_d)^{1/d}-\ell'}$ is $\ell'$-late. By Lemma \ref{earlyimplieslate}, $(\ell^T, m^T)$ is unlikely to belong to the semi-infinite rectangle in the left figure if $\ell < m/C_0$.  By Lemma \ref{lateimpliesearly}, $(\ell^T,m^T)$ is unlikely to belong to the semi-infinite rectangle in the second figure if $\ell \geq C_1((\log T)m)^{1/3}$.  Theorem~\ref{thm:logfluctuations} will follow because $m^T> m_0 = T$ is impossible and the other rectangles on the right are all (by Lemmas~\ref{earlyimplieslate} and~\ref{lateimpliesearly}) unlikely.}
\end {center}
\end {figure}

We now proceed to derive Theorem~\ref{thm:logfluctuations} from Lemmas~\ref{earlyimplieslate} and~\ref{lateimpliesearly}.  The lemmas themselves will be proved in Section \ref{sec:earlylate}.  Let $C=\max(C_0,C_1)$.
We start with
\[
m_0 = T.
\]
Note that $A(T) \subset \B_T$, so $\PP(\Early_{T}[T]) = 0$.  Next, for $j \geq 0$ we let
\[
\ell_j = \max(C((\log T)m_j)^{1/3}, C\sqrt{\log T})
\]
and
\[
m_{j+1} = C \ell_j.
\]
By induction on $j$, we find
\begin{align*}
&\PP(\Early_{m_j}[T]) < 2j T^{-10\gamma} \\
&\PP(\Late_{\ell_j}[T])  < (2j+1)T^{-10\gamma}.
\end{align*}

To estimate the size of $\ell_j$, let $K=C^4 \log T$ and note that $\ell_j \leq \ell'_j$, where
\[
\ell'_0  = (KT)^{1/3}; \quad \ell'_{j+1} = \max( (K\ell'_j)^{1/3}, K^{1/2}).
\]
Then
\[
\ell'_j \le \max (K^{1/3 + 1/9 + \cdots + 1/3^j} T^{1/3^j}, K^{1/2})
\]
so choosing $J= \log T$ we have
\[
T^{1/3^J}  < 2
\]
and
\[
\ell_J \le 2K^{1/2} \le C \sqrt{\log T}.
\]

The probability that $A(T)$ has $\ell_J$-late points or $m_J$-early points is at most
\[
(4J+1) T^{-10\gamma} < T^{-9\gamma} < r^{-\gamma}.
\]

Setting $T=\omega_d r^d$, $\ell = \ell_J$ and $m=m_J$, we conclude that if $a$ is sufficiently large, then
\[
\PP \left\{ \B_{r - a \sqrt{\log r}} \subset A(\omega_d r^d) \subset \B_{r+ a \sqrt{\log r}} \right\} \leq
\PP ( \Early_{m}[T] \cup \Late_{\ell}[T] ) < r^{-\gamma}
\]
which completes the proof of Theorem~\ref{thm:logfluctuations}.

\section{Green function estimates on the grid}
\label{sec:green}

%As it turns out, calling this $P(x)$ the discrete Poisson kernel is a misnomer.  It is  really just Green's function for the ball with pole at $y$.   It resembles the Poisson kernel when $k=1$, since the delta function is nearly at the boundary in that case.

%Important:  this argument shows that we don't need LBG because we can start the induction at the empty event of a $2T$ early point by time $T$. The late implies early lemma no longer uses the bound on late points from LBG, but rather proves it by probing further inside (pole of Green's function away from the boundary) as, indeed, we thought might work to get rid of this.

This section assembles several Green function estimates that we need to prove Lemmas \ref{earlyimplieslate} and \ref{lateimpliesearly}.  The reader who prefers to proceed to the heart of the argument may skip this section on a first read and refer to the lemma statements as necessary.  Fix $d \geq 3$ and consider the $d$-dimensional grid
	\[ \Grid = \{(x_1,\ldots,x_d) \in \R^d \setcolon \mbox{at most one } x_i \notin \Z \}. \]
In many of the estimates below, we will assume that a positive integer $k$ and a $y\in \Z^d$ have been fixed.  We write $s=|y|$ and
\[
\O = \O(y,k) := \Grid\cap B_{s+k}\backslash \{y\}.
\]
For $x \in \partial \Omega$, let $$P(x) =P_{y,k}(x)$$ be the probability that a Brownian motion on the grid $\Grid$ (defined in the obvious way; see \cite{JLS})
starting at $x$ reaches $y$ before exiting $B_{s+k}$.  Note that $P$ is {\bf grid harmonic} in $\Omega$ (i.e., $P$ is linear on each segment of $\Omega \setminus \Z^d$, and for each $x \in \Omega \cap \Z^d$, the sum of the slopes of $P$ on the $2d$ directed edge segments starting at $x$ is zero).  Boundary conditions are given by $P(y)=1$ and $P(x)=0$ for $x \in (\partial \Omega) \setminus \{y \}$.  The point $y$ plays the role that $\zeta$ played in \cite{JLS}, and $P$ plays the role of the discrete harmonic function $H_\zeta$.  One difference from \cite{JLS} is that we will take $y$ inside the ball (i.e., $k \geq 1$) instead of on the boundary.

To estimate $P$ we use the discrete Green function $g(x)$, defined as the expected number of visits to $x$ by a simple random walk started at the origin in $\Z^d$.  The well-known asymptotic estimate for $g$ is~\cite{Uch}
\begin{equation}
\label{eq:uchiyama}
\left|g(x) - a_d|x|^{2-d}\right|  \le C |x|^{-d}
\end{equation}
for dimensional constants $a_d$ and $C$ (i.e., constants depending only on the dimension $d$).
%and $g(0)$ is some (positive) dimensional constant.
We extend $g$ to a function, also denoted $g$, defined on
the grid $\Grid$ by making $g$ linear on each segment between
lattice points. Note that $g$ is grid harmonic on $\Grid \setminus \{0 \}$.

Throughout we use $C$ to denote a large positive dimensional constant, and $c$ to denote a small positive dimensional constant, whose values may change from line to line.

\begin{lemma}\label{upperbound} There is a dimensional constant $C$ such that
\begin{itemize}
\item[\em (a)] $\disp P(x) \le {C}/{(1 + |x-y|^{d-2})}$.
\item[\em (b)] $\disp P(x) \le {Ck(s+k+1-|x|)}/{|x-y|^{d}}, \qquad \mbox{for } |x-y| \ge k/2$.
\item[\em (c)] $\disp \max_{x\in \B_r} P(x) \le {Ck}/{(s-r-k)^{d-1}}$ for $r < s-2k$.
\end{itemize}
\end{lemma}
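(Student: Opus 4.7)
The plan is to compare $P$ with explicit grid-harmonic functions built from $g$, using the maximum principle on $\Omega$ together with the Uchiyama estimate \eqref{eq:uchiyama}. For (a), $\tilde g(x) := g(x-y)/g(0)$ is grid-harmonic on $\Grid \setminus \{y\}$, equals $1$ at $y$, and is positive on $\partial B_{s+k}$. Since $P$ also equals $1$ at $y$ and vanishes on $\partial B_{s+k}\setminus\{y\}$, the maximum principle applied to $P - \tilde g$ on $\Omega$ gives $P \le \tilde g$. Combining \eqref{eq:uchiyama} (which yields $g(z) \le C|z|^{2-d}$ for $|z| \ge 1$) with the trivial bound $g \le g(0)$ when $|x-y| \le 1$ then produces (a).

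For (b), I would use the method of images. Set $R := s+k$ and $y^* := (R/s)^2 y$, the inversion of $y$ across $\partial B_R$; this lies outside $\overline{B_R}$, so $g(x-y^*)$ is grid-harmonic on $\Grid \cap B_R$. Consider
\[
\tilde h(x) := g(x-y) - (R/s)^{d-2} g(x-y^*),
\]
which is grid-harmonic on $\Omega$. The elementary identity
\[
|x-y^*|^2 = (R/s)^2 |x-y|^2 + B, \qquad B := (R^2-s^2)(R^2-|x|^2)/s^2,
\]
shows that $B = 0$ and $|x-y^*| = (R/s)|x-y|$ on $\partial B_R$, so the two terms of $\tilde h$ cancel to leading order there and \eqref{eq:uchiyama} gives $|\tilde h| \le C|x-y|^{-d}$ on $\partial B_R$. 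For $x \in \Omega$ with $|x-y| \ge k/2$, a two-case analysis yields the interior bound: if $B \lesssim |x-y|^2$, a Taylor expansion of $|x-y^*|^{-(d-2)}$ about $(R/s)^{-(d-2)}|x-y|^{-(d-2)}$ gives $\tilde h(x) \lesssim B/|x-y|^d$; if $B \gtrsim |x-y|^2$, then $\tilde h \le g(x-y) \lesssim |x-y|^{-(d-2)} = |x-y|^2/|x-y|^d \lesssim B/|x-y|^d$ is immediate. Since $B \lesssim k(R-|x|)$, both cases give
\[
\tilde h(x) \le \frac{Ck(R+1-|x|)}{|x-y|^d},
\]
the $+1$ absorbing the Uchiyama error. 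As $|y-y^*| \ge 2k \ge 2$, $\tilde h(y) \ge c > 0$, and the maximum principle applied to $P - C\tilde h/\tilde h(y)$, with a small correction handling the boundary discrepancy on $\partial B_R$, yields $P \le C\tilde h$, establishing (b).

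For (c), I would maximize the bound in (b) over $\B_r$. The hypothesis $r < s-2k$ gives $|x-y| \ge s-r > 2k > k/2$, so (b) applies; the expression $(s+k+1-|x|)/|x-y|^d$ is maximized over $\B_r$ at $x$ in the direction of $y$ at radius $r$, producing $(s+k+1-r)/(s-r)^d$. Writing $(s-r)^d = ((s-r-k)+k)^d$ and keeping the first two terms of the binomial expansion, $(s-r)^d \ge (s-r-k)^{d-1}(s-r+(d-1)k)$; since $(d-2)k \ge 1$ for $k \ge 1$ and $d \ge 3$, this is at least $(s-r-k)^{d-1}(s+k+1-r)$, so
\[
\frac{s+k+1-r}{(s-r)^d} \le \frac{1}{(s-r-k)^{d-1}},
\]
proving (c).

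The main obstacle is part (b): the cancellation of $\tilde h$ on $\partial B_R$ is exact only in the continuum, and on the grid one is left with an Uchiyama error of order $|x-y|^{-d}$; the small correction needed to promote $\tilde h$ to an actual majorant of $P$ must be verified not to spoil the target bound uniformly in $x \in \Omega$. Splitting cleanly between the Taylor and non-Taylor regimes is also slightly delicate, and one must check that the lower bound $\tilde h(y) \ge c > 0$ holds for all $k \ge 1$, possibly by a slight enlargement of the reflection sphere when $k$ is small.
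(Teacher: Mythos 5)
Your parts (a) and (c) match the paper's treatment (indeed, for (c) the paper just says ``follows from part (b)'' and your monotonicity-plus-binomial computation is a correct filling-in of that step). For part (b), however, you take a genuinely different route. The paper argues in two stages: first it compares $P$ with $g(x-y)-g(x-y^*)$ for a \emph{radially shifted} pole $y^*$ near $(s+2k+C_1)y/s$, which only yields the intermediate bound $Ck/|x-y|^{d-1}$ (the ``depth'' gain from a shifted pole is measured relative to the hyperplane through $y$, not the sphere); it then recovers the extra factor $(s+k+1-|x|)/|x-y|$ by a second, local barrier $F(x)=a_dL^{2-d}-g(x-z^*)$ built from exterior tangent balls at each boundary point $z$. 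Your single-step Kelvin image $g(x-y)-(R/s)^{d-2}g(x-y^*)$ with $y^*=(R/s)^2y$ gets the full boundary decay in one stroke, since the identity $|x-y^*|^2=(R/s)^2|x-y|^2+B$ with $B\approx k(R-|x|)$ produces exactly the numerator $k(s+k+1-|x|)$; this is cleaner and closer to the continuum Green function estimate for the ball.

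The one point you must actually execute, and which you correctly flag, is that the exact image function is \emph{not} a valid majorant on the grid: on $\partial B_{s+k}$ the leading terms cancel exactly, so the Uchiyama error $O(|x-y|^{-d})$ can make $\tilde h$ negative there, and the maximum principle then gives nothing. Your proposed fix --- reflecting across a slightly larger sphere $\partial B_{R+C_1}$ for a dimensional constant $C_1$ --- does work: it makes the leading term on $\partial B_R$ of size at least $cC_1k/|x-y|^d$, dominating the lattice error for $C_1$ large, while only changing the interior upper bound by a constant factor. This is precisely the role of the constant $C_1$ in the paper's choice $y^*\approx(s+2k+C_1)y/s$, so the two arguments converge on the same trick at this point. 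With that perturbation carried out (and the easy check that $\tilde h(y)\ge c>0$, which follows from $|y-y^*|\ge 2k\ge 2$ and $g(0)-\max_{|z|\ge 2}g(z)\ge c$), your proof of (b) is complete and valid uniformly down to $|x-y|\ge k/2$ --- in fact it handles the range $k/2\le|x-y|\le 2k$ that the paper itself only sketches via tangent balls and part (a).
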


\begin{proof}
%[Proof of Lemma \ref{upperbound}]
The maximum
principle (for grid harmonic functions) implies
$Cg(x-y)\ge P(x)$ on $\O$, which gives part (a).

The maximum principle also implies that for $x\in \O$,
\begin{equation}
\label{eq:greendifference}
P(x) \le C(g(x-y) - g(x-y^*))
\end{equation}
where $y^*$ is the one of the lattice points nearest to $(s + 2k + C_1)y/s$.
Indeed, both sides are grid harmonic on $\O$, and the right side is positive on $\partial B_{s+k}$ by \eqref{eq:uchiyama}, so it suffices to take $C=(g(0) - g(y-y^*))^{-1}$.

Combining (\ref{eq:uchiyama}) and (\ref{eq:greendifference}) yields the bound
\[
P(x) \le \frac{Ck}{|x-y|^{d-1}},  \quad \mbox{for} \ |x-y|\ge 2k.
\]
Next, let $z\in \partial B_{s+k}$ be such that $|z-y| = 2L$, with $L \ge 2k$.
The bound above implies
\[
P(x) \le \frac{Ck}{L^{d-1}},  \quad \mbox{for} \ x\in B_L(z)
\]
Let $z^*$ be one of the lattice points nearest to $(s+k + L+ C_1)z/|z|$.  Then
\[
F(x) = a_d L^{2-d} - g(x-z^*)
\]
is comparable to $L^{2-d}$ on $\partial B_{2L}(z^*)$ and positive
outside the ball $B_L(z^*)$ (for a large enough dimensional
constant $C_1$ --- in fact, we can also do this with $C_1=1$ with
$L$ large enough).  It follows that
\[
P(x) \le C(k/L^{d-1})(L^{d-2})F(x)
\]
on $\partial (B_{2L}(z^*) \cap \O)$ and hence by the maximum
principle on $B_{2L}(z^*) \cap \O$.  Moreover,
\[
F(x) \le C(s+k + 1 - |x|)/L^{d-1}
\]
for $x$ a multiple of $z$ and $s + k - L \le |x| \le s+k$.
Thus for these values of $x$,
\[
P(x) \le C(k/L)F(x) \le Ck(s+k+1-|x|)/L^d
\]
We have just confirmed the bound of part (b) for points $x$
collinear with $0$ and $z$, but $z$ was essentially arbitrary.
To cover the cases $|x-y| \le 2k$ one has to use exterior
tangent balls of radius, say $k/2$, but actually the
upper bound in part (a) will suffice for us in the range $|x-y| \le Ck$.

Part (c) of the lemma follows from part (b).
\end{proof}

The mean value property (as typically stated for continuum harmonic functions) holds only approximately for discrete harmonic functions.  There are two choices for where to put the approximation: one can show that the average of a discrete harmonic function $u$ over the discrete ball $\B_r$ is approximately $u(0)$, or one can find an approximation $w_r$ to the discrete ball $\B_r$ such that averaging $u$ with respect to $w_r$ yields \emph{exactly} $u(0)$.  The divisible sandpile model of \cite{LP09} accomplishes the latter.  In particular, the following discrete mean value property follows from Theorem 1.3 of \cite{LP09}.

\begin{lemma} {\em (Exact mean value property on an approximate ball)}
\label{lem:sandpile}
For each real number $r > 0$, there is a function $w_r : \Z^d \to [0,1]$ such that
	\begin{itemize}
%	\item $0 \le w_r\le 1$.
	\item $w_r(x) = 1$ for all $x\in \B_{r-c}$, for a constant $c$ depending only on $d$.
	\item $w_r(x) = 0$ for all $x\notin \B_{r}$.
	\item	 For any function $u$ that is discrete harmonic on $\B_{r}$,
\[
\sum_{x\in \Z^d} w_r(x)(u(x)- u(0)) = 0.
\]
	\end{itemize}
\end{lemma}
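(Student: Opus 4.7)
The plan is to take $w_r$ to be the final mass distribution of the divisible sandpile on $\Z^d$ started from an appropriate mass $m$ (of order $\omega_d r^d$) concentrated at the origin.  In this model, any site carrying mass greater than $1$ topples by redistributing the excess equally among its $2d$ neighbors, so the final density $w_r$ automatically takes values in $[0,1]$.  Writing $v(x)$ for the odometer function (the total mass ever emitted from $x$) and $\Delta$ for the discrete Laplacian on $\Z^d$, conservation of mass gives
\[
\Delta v(x) = w_r(x) - m\,\one_{\{x=0\}},
\]
and the support of $v$ coincides with the occupied set.

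The first two bulleted conclusions would follow by invoking Theorem~1.3 of~\cite{LP09}, which establishes that the occupied region of the divisible sandpile is very close to a Euclidean ball: for a dimensional constant $c$, one may choose $m$ so that the support of $v$ lies inside $\B_r$ while $w_r \equiv 1$ throughout $\B_{r-c}$.  For the third bullet, suppose $u$ is discrete harmonic on $\B_r$.  Since $v$ has finite support contained in $\B_r$, the self-adjointness of $\Delta$ (summation by parts) gives
\[
0 = \sum_{x \in \Z^d} v(x)\,\Delta u(x) = \sum_{x \in \Z^d} u(x)\,\Delta v(x) = \sum_{x\in \Z^d} u(x)\,w_r(x) - m\,u(0).
\]
Mass conservation yields $\sum_x w_r(x) = m$, so that $m\,u(0) = u(0)\sum_x w_r(x)$, and subtracting produces exactly the claimed identity.

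The only real obstacle is the shape control for the divisible sandpile occupied set: one needs both the outer containment in $\B_r$ and the inner containment of $\B_{r-c}$, with a constant $c$ independent of $r$.  But this is precisely the content of Theorem~1.3 of \cite{LP09}, which the lemma's statement already credits; once the density $w_r$ is in hand, the mean value identity itself is a one-line application of Green's identity.
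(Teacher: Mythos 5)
Your proposal is correct and is exactly the argument the paper intends: the paper gives no separate proof, simply citing Theorem~1.3 of \cite{LP09}, and your construction of $w_r$ as the final mass of the divisible sandpile, with the inner/outer ball bounds from that theorem and the odometer summation-by-parts identity $\sum v\,\Delta u = \sum u\,\Delta v$, is the standard way to extract the stated mean value property. No gaps.
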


The next lemma bounds sums of $P$ over discrete spherical shells and discrete balls.  Recall that $s=|y|$.

\begin{lemma}\label{meanvalue} There is a dimensional constant $C$ such that
\begin{itemize}
\item[\em (a)] $\disp \sum_{x \in \B_{r+1}\backslash \B_r} P(x) \le Ck$ for all $r \le s+k$.
\item[\em (b)] $\disp \left|\sum_{x \in \B_r} (P(x) -P(0)) \right|\le Ck$ for all $r \le s$. % or s+c
\item[\em (c)] $\disp \left|\sum_{x \in \B_{s+k}} (P(x) -P(0)) \right|\le Ck^2$.
\end{itemize}
\end{lemma}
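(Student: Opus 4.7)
My plan is to prove the three bounds in order, using the pointwise estimates of Lemma~\ref{upperbound} for part~(a) and combining these with the exact mean-value identity of Lemma~\ref{lem:sandpile} for parts~(b) and~(c). Part~(a) is a direct summation argument; part~(b) reduces a sum over $\B_r$ to the discrete mean-value formula, with the boundary collar handled by (a); and part~(c) is obtained by splitting off the inner ball $\B_{s-c}$, on which $P$ is discrete harmonic, and applying (a) to the remaining $O(k)$ unit shells.

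For part~(a), I split the shell $\B_{r+1}\setminus \B_r$ according to whether $|x-y|\le k$ or $|x-y|>k$. On the near piece, $P(x)\le C/(1+|x-y|^{d-2})$ by Lemma~\ref{upperbound}(a); working in spherical coordinates around $y$, the shell is a slab of thickness $1$ and its intersection with the sphere of radius $\rho$ about $y$ contains $O(\rho^{d-2})$ lattice points, so the sum is dominated by $\sum_{\rho\le k}\rho^{d-2}\cdot \rho^{-(d-2)}=O(k)$. On the far piece, $P(x)\le Ck(s+k+1-|x|)/|x-y|^d$ by Lemma~\ref{upperbound}(b); an analogous spherical-shell integral on the sphere of radius $r$ around the origin gives $\sum_{\mathrm{shell},\,|x-y|>k}|x-y|^{-d}\le C/\max(|r-s|,k)$, and combining this with the linear factor $s+k+1-r\le |r-s|+k+1$ yields $O(k)$ in both cases $|r-s|\ge k$ and $|r-s|<k$.

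For part~(b), provided $r<s$ (the edge case $r=s$ is reduced to $r=s-c$ by peeling off $O(1)$ shells controlled by (a)), the point $y$ lies outside $\B_r$, so $P$ is discrete harmonic on $\B_r$. Lemma~\ref{lem:sandpile} then yields $\sum_x w_r(x)\bigl(P(x)-P(0)\bigr)=0$, hence
$$\sum_{x\in \B_r}\bigl(P(x)-P(0)\bigr)=\sum_x\bigl(\mathbf{1}_{\B_r}(x)-w_r(x)\bigr)\bigl(P(x)-P(0)\bigr),$$
and the summand is supported on the constant-thickness collar $\B_r\setminus \B_{r-c}$. The $P$-part of this collar sum is $O(k)$ by part~(a), and the $P(0)$-part is $O(r^{d-1})\cdot O(k/(s-k)^{d-1})=O(k)$ by Lemma~\ref{upperbound}(c) applied at the origin (using $r\le s$). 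Part~(c) follows by writing $\sum_{\B_{s+k}}(P-P(0))=\sum_{\B_{s-c}}(P-P(0))+\sum_{\B_{s+k}\setminus \B_{s-c}}(P-P(0))$: the first sum is $O(k)$ by part~(b), while the second involves $O(k)$ unit shells each contributing $O(k)$ by part~(a), plus a $P(0)$-term of size $O(ks^{d-1})\cdot O(k/s^{d-1})=O(k^2)$.

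The main obstacle will be the bookkeeping in part~(a) when $|r-s|$ is comparable to $k$, where neither pointwise bound from Lemma~\ref{upperbound} cleanly dominates and the threshold separating the two cases must be chosen with care; the other parts are then essentially mechanical consequences of (a) and the mean-value property.
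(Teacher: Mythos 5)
Your proof follows the paper's argument essentially step for step: part (a) by splitting the shell at $|x-y|\approx k$ and applying Lemma~\ref{upperbound}(a) near $y$ and Lemma~\ref{upperbound}(b) far from $y$, part (b) by the exact mean value property of Lemma~\ref{lem:sandpile} plus the collar $\B_r\setminus\B_{r-c}$ controlled by (a), and part (c) by peeling off $O(k)$ unit shells. The one small deviation is that you bound $P(0)$ via Lemma~\ref{upperbound}(c), which formally requires $s>2k$, where the paper invokes Lemma~\ref{upperbound}(b); in the regime $s\lesssim k$ one should instead use Lemma~\ref{upperbound}(a) (giving $P(0)\le Cs^{2-d}$, so the collar still contributes $O(s)\le O(k)$), a point the paper's own write-up glosses over in the same way.
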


\begin{proof}
%[Proof of Lemma~\ref{meanvalue}]
Part (a) follows from Lemma \ref{upperbound}: Take the worst shell, when $r=s$.
Then the lattice points with $|x-y| \le k$, $s \le |x|\le s+1$
are bounded by Lemma \ref{upperbound}(a)
\[
\int_0^k s^{2-d} s^{d-2}ds = k
\]
(volume element on disk with thickness $1$ and radius $k$ in
$\Z^{d-1}$ is $s^{d-2}ds$.)  For the remaining portion of the shell,
Lemma \ref{upperbound}(b) has numerator  $k(s+k - s) = k^2$,
so that
\[
\int_{k}^\infty k^2 s^{-d} s^{d-2}ds =  k
\]
Next, for part (b), let $w_r$ be as in Lemma~\ref{lem:sandpile}.  Since $P$ is discrete harmonic in $\B_s$, we have for $r \leq s$
	\begin{align*}  \sum_{x \in \Z^d} w_r(x)(P(x) - P(0)) = 0.
	\end{align*}
Since $w_r$ equals the indicator $\one_{\B_r}$ except on the annulus $\B_r \setminus \B_{r-c}$, and $|w_r| \leq 1$, we obtain
\begin{align*}
\left|\sum_{x\in \B_r} (P(x) -P(0)) \right|
&\le
\sum_{x\in \B_{r}\setminus \B_{r-c}} |w_r(x)| \left| P(x) -P(0) \right|  \\
&\le
\sum_{x\in \B_{r}\setminus \B_{r-c}} (P(x) + P(0))  \\
&  \le Ck.
\end{align*}
In the last step we have used part (a) to bound the first term; the second term is bounded by Lemma \ref{upperbound}(b), which says that $P(0) \le Ck/s^{d-1}$.

Part (c) follows by splitting the sum over $\B_{s+k}$ into $k$ sums over spherical shells $\B_{s+j} \setminus \B_{s+j-1}$ for $j=1,\ldots,k$, each bounded by part (a), plus a sum over the ball $\B_s$, bounded by part (b).
\end{proof}

Fix $\alpha>0$, and consider the level set
	\[ U  = \{x \in \Grid \mid g(x)>\alpha\}. \]
For $x \in \partial U$, let $p(x)$ be the probability that a Brownian motion started at the origin in $\Grid$ first exits $U$ at $x$.

%The next lemma is a discrete analogue of the equality between harmonic measure and the normal derivative of the Green function.
\begin{lemma}\label{Greensformula}
Choose $\alpha$ so that $\partial U$ does not intersect~$\Z^d$. For each $x \in \partial U$, the quantity $p(x)$ equals the directional derivative of $g/2d$ along the directed edge in $U$ starting at $x$.
%(For
%non-lattice points $x$ there is only one direction, but
%for lattice points there can be several and we
%take the sum of all of them.  It is easy to see that this
%is the right thing to do by continuity from a nearby perturbation
%in which the lattice point is not in the region, but all the
%segments that lead to it from inside have separate ends.)
\end{lemma}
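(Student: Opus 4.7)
The plan is to establish this by Green's identity (discrete integration by parts) on the metric graph $\Grid$ restricted to $U$, with the directional derivative emerging as the normal derivative of $g/(2d)$ on $\partial U$ in the standard way. Let $v := g - \alpha$: this function is linear on every edge of $\Grid$, vanishes on $\partial U$, and is grid harmonic at every vertex of $\Grid$ except the origin. At $0$, the one-step random-walk identity $g(0) = 1 + \tfrac{1}{2d}\sum_{y \sim 0} g(y)$ rearranges to $\sum_{y \sim 0}(g(y)-g(0)) = -2d$, so the sum of the $2d$ outgoing edge-slopes of $v$ at the origin equals $-2d$. Let $u$ be an arbitrary grid harmonic function on $U$; by solvability of the Dirichlet problem (take $u(z):=\EE_z[f(W_\tau)]$ for any prescribed $f$ on $\partial U$), we may assume $u$ takes any desired values on $\partial U$.

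On each edge-segment inside $U$, both $u$ and $v$ are linear, so $uv''-vu''\equiv 0$ pointwise, and the per-edge identity $\int(uv''-vu'')\,dt=[uv'-vu']$ forces the total boundary contribution, summed over all edge-segments of $U$, to equal zero. Collecting these contributions by endpoint (using the outgoing-slope convention at each vertex): at every interior grid vertex $w\in U\setminus\{0\}$, both $u$ and $v$ satisfy Kirchhoff, so the contribution vanishes; at the origin, the $-2d$ Kirchhoff defect of $v$ produces a contribution of $2d\cdot u(0)$; and at each $x\in\partial U$, exactly one edge-segment of $U$ abuts $x$ and $v(x)=0$, so the contribution reduces to $-u(x)\,s_x$, where $s_x$ is the directional derivative of $g$ at $x$ along the directed edge entering $U$. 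Setting the total to zero yields
\[
u(0) \;=\; \sum_{x\in\partial U} u(x)\,\frac{s_x}{2d}.
\]

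By the optional stopping theorem applied to the continuous martingale $u(W_t)$ stopped at the exit time $\tau$ of $U$, one also has $u(0)=\sum_{x\in\partial U} p(x)\,u(x)$. Since $u|_{\partial U}$ can be chosen arbitrarily, the two formulas must have equal coefficients, giving $p(x)=s_x/(2d)$, which is exactly the directional derivative of $g/(2d)$ along the edge-segment of $U$ starting at $x$. The main bookkeeping task is careful sign-tracking in the metric-graph Green's identity and isolating the sole interior Kirchhoff defect (namely $-2d$ at the origin); once that is nailed down, the conclusion is immediate. A useful sanity check is that specializing to $u\equiv 1$ forces $\sum_x s_x = 2d$, so the $p(x)$ automatically sum to $1$.
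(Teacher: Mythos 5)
Your proof is correct and follows essentially the same route as the paper: Green's second identity on the metric graph applied to $u$ and the Green's function, with the delta (Kirchhoff defect $-2d$) of $g$ at the origin producing the $u(0)$ term and the boundary flux producing the directional derivatives. The only cosmetic differences are that you normalize $v = g-\alpha$ to kill the $v\,\partial_{\nu}u$ boundary term and then match coefficients against the harmonic-measure representation for arbitrary boundary data, whereas the paper keeps $G=-g/2d$ (its boundary term vanishes because $G$ is constant on $\partial U$ and $u$ is harmonic) and simply takes $u$ to be the probability of exiting at the given point $x$.
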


\begin{proof}
We use a discrete form of the divergence theorem
\begin{equation}\label{div}
\int_U \div V = \sum_{\partial U} \nu_U \cdot V.
\end{equation}
where $V$ is a vector-valued function on the grid, and the integral on the left is a one-dimensional integral
over the grid.  The dot product $\nu_U \cdot V$ is defined as $e_j \cdot V(x-0e_j)$, where $e_j$ is the unit vector pointing toward $x$ along the unique incident edge in~$U$.
To define the divergence, for $z = x + te_j$, where $0 \le t < 1$ and $x \in \Z^d$, let
\[
\div V (z) := \frac{\de}{\de x_j} e_j\cdot V(z)
+ \delta_x(z)\sum_{j=1}^d (e_j\cdot V(x+ 0e_j) -e_j\cdot V(x-0e_j)).
\]

If $f$ is a continuous function on $U$ that is $C^1$ on each
connected component of $U-\Z^d$, then the gradient of $f$ is the vector-valued function
\[
V = \nabla f= (\de f/\de x_1, \de f/\de x_2, \dots , \de f/\de x_d)
\]
with the convention that the entry $\de f/\de x_j$ is $0$ if the segment
is not pointing in the direction $x_j$.  Note that $\nabla f$ may be discontinuous at points of $\Z^d$.
%Note that at lattice points, we have values coming from $x + 0e_j$ and $x-0e_j$, which are typically not equal. In all there are $2d$ limiting values at each lattice site.

\old{
If $x$ is a lattice boundary rather than an interior point, then
there are no delta functions at $x$, which is outside the
region.  This is consistent with the definition being
\[
\div V = \sum_j \frac{\de}{\de x_j} e_j\cdot V(z)
\]
in the distribution sense when tested against smooth
functions supported on the interior.

Finally,  at a lattice site $x$ on the boundary of a region
$\O$ we define the dot product with the outward pointing normal as
\[
\nu_U \cdot V(x) = \sum_{j=1}^d (-e_j\cdot V(x+ 0e_j) +e_j\cdot V(x-0e_j))
\]
where the terms of the sum that appear are the ones that
can be evaluated from inside $U$, i.~e. for the term $-e_j\cdot V(x+0e_j)$
to appear we must have $x+te_j\in U$ for sufficiently small $t>0$.
Note that all $2d$ terms might appear if $x$ is an isolated point of the
complement of $U$ in the grid.   If $x$ is a boundary point of $U$,
but not a lattice site, then there is only one, unambiguous term,
the continuous limit (from the inside) of the value of $e_j\cdot V$
along the segment in the $x_j$ direction.
}

Let $G = -g/2d$, so that $\div \nabla G  = \delta_0$.  If $u$ is
grid harmonic on $U$, then $\div \nabla u = 0$ and
\[
\div(u\nabla G - G\nabla u) = u(0) \delta_0.
\]
Indeed, on each segment this is the same as $(uG' - u'G)'  = u'G' - u'G' + uG'' - u''G = 0$
because $u $ and $G$ are linear on segments.   At lattice points
$u$ and $G$ are continuous, so the divergence operation commutes
with  the factors $u$ and $G$ and gives exactly one nonzero delta term,
the one indicated.

Let $u(y)$ be the probability that Brownian motion on $U$ started at $y$ first exits $U$ at $x$.  Since $u$ is grid-harmonic on $U$, we have $\div \nabla u = 0$ on $U$, hence by the divergence theorem
\[
u(0) = \int_{U} \div(u\nabla G - G\nabla u) = \sum_{\partial U} u\, \nu_U \cdot \nabla G. \qed
\]
% Thus to complete the proof of Lemma \ref{Greensformula} one has to prove \eqref{div} and deduce the appropriate variant of \eqref{green} and check that $u$ grid-harmonic implies $\div \nabla u=0$.
\renewcommand{\qedsymbol}{}
\end{proof}

Next we establish some lower bounds for $P$.

\begin{lemma}\label{lowerbound} There is a dimensional constant $c>0$ such that
\begin{itemize}
\item[(a)] $\disp P(0) \ge ck/s^{d-1}$.
\item[(b)] Let $k=1$, and $z = (1-\frac{2m}{s})y$.  Then
\[
\min_{x\in \B(z,m)} P(x)  \ge c/m^{d-1}.
\]
%(k=1 is the only case in which this is used)
\end{itemize}
\end{lemma}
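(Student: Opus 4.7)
My plan is to reduce both lower bounds to comparisons with the continuum Green function of the ball via a Kelvin-transform test function. By optional stopping of the grid-harmonic martingale $g(X_t - y)$ at the minimum of the hitting time of $y$ and the exit time from $B_{s+k}$, together with the strong Markov property at the hitting time, one obtains the identity
\[
P(x) \;=\; \frac{g(x-y) - u(x)}{g(0) - u(y)},
\]
where $u$ is the grid-harmonic extension to $B_{s+k}$ of the boundary values $g(\cdot - y)|_{\partial B_{s+k}}$. Since $u(y) \le g(k) < g(0)$ for all $k \ge 1$, the denominator is a positive dimensional constant, so it suffices to lower bound the numerator $g(x-y) - u(x)$.

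To do this, let $y_*$ be the lattice point nearest to $(s+k)^2 y/s^2$ (the continuum Kelvin image of $y$ in the sphere $\partial B_{s+k}$) and set $\lambda := ((s+k)/s)^{d-2}$. The test function $F(x) := g(x-y) - \lambda g(x-y_*)$ is grid-harmonic on $\Omega$. The continuum Kelvin identity $|\xi-y|^{2-d} \equiv \lambda |\xi-y_*|^{2-d}$ on $\partial B_{s+k}$, combined with \eqref{eq:uchiyama} and a small lattice-discretization error from $y_*$, gives the boundary estimate $|F(\xi)| \le C(|\xi-y|^{-d} + |\xi-y_*|^{1-d})$. Letting $H$ be the grid-harmonic extension to $B_{s+k}$ of $F|_{\partial B_{s+k}}$, uniqueness of the discrete Dirichlet problem forces $P(x) = (F(x) - H(x))/(F(y) - H(y))$. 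A direct computation gives $F(0) = a_d[s^{2-d} - (s+k)^{2-d}] + O(s^{-d}) \ge ck/s^{d-1}$ for part (a) (using $s^{2-d} - (s+k)^{2-d} \ge (d-2)k(s+k)^{1-d}$), while the analogous Kelvin computation at $x \in B(z, m)$ (where $|x-y|, |x-y_*| \asymp m$) yields $F(x) \ge c/m^{d-1}$ uniformly for part (b).

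The remaining task is to show that the harmonic correction $H$ is of lower order. For part (a), integrating the boundary bound against the nearly uniform harmonic measure from the origin gives $|H(0)| \lesssim 1/(ks^{d-1})$, using $\int_{\partial B_{s+k}} |\xi-y|^{-d}\,d\sigma \lesssim 1/k$ divided by area $\asymp s^{d-1}$. For part (b), the Poisson kernel of $B_{s+1}$ at $x \in B(z, m)$ decays like $m/|x-\xi|^d$, and the large boundary values of $F$ live on a region of area $O(1)$ near $y$ on $\partial B_{s+1}$, giving $|H(x)| \lesssim 1/m^{d-1}$. The main obstacle is the small-$k$ regime, in particular $k=1$ throughout part (b): there the upper bound on $|H|$ matches the order of the leading contribution to $F$, and closing the argument requires a careful tracking of constants or a sharper Uchiyama-type expansion capturing the sign of the next-order correction near $\partial B_{s+k}$.
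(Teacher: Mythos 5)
Your reduction $P(x)=\bigl(F(x)-H(x)\bigr)/\bigl(F(y)-H(y)\bigr)$ is correct, and the denominator bound is fine (indeed $g(0)-g(e_1)=1$). The gap is in the claim that the harmonic correction $H$ is of lower order, and it occurs precisely in the regime the paper needs: $k=1$ is used for part (b) and also for part (a) inside the proof of Lemma~\ref{earlyimplieslate}. Your boundary estimate $|F(\xi)|\le C\bigl(|\xi-y|^{-d}+|\xi-y_*|^{1-d}\bigr)$ has two sources of error, the $O(|\xi-y|^{-d})$ term from \eqref{eq:uchiyama} and the $O(|\xi-y_*|^{1-d})$ term from rounding the Kelvin image to a lattice point, and when you integrate these against near-uniform harmonic measure from the origin you get (with $R=s+k$)
\[
|H(0)|\;\lesssim\;R^{1-d}\int_k^{R}\bigl(\rho^{-d}+\rho^{1-d}\bigr)\rho^{d-2}\,d\rho\;\asymp\;R^{1-d}\Bigl(\tfrac1k+\log\tfrac Rk\Bigr),
\]
whereas the main term is $F(0)\asymp k\,R^{1-d}$. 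For $k\lesssim\log R$, and in particular for $k=1$, the correction is \emph{larger} than the main term by a factor $\log s$; this is a genuine divergence, not a matter of constants, so ``careful tracking of constants'' cannot close it. The same logarithm appears in part (b): the sum of $|\xi-y_*|^{1-d}$ over the boundary points within distance $m$ of $y$ is $\asymp\log m$, giving $|H(x)|\gtrsim m^{1-d}\log m$ against a main term $c\,m^{1-d}$ (and even the main term $a_d\bigl[|x-y|^{2-d}-\lambda|x-y_*|^{2-d}\bigr]$ is itself only of size $m^{1-d}$, i.e.\ the same order as the Uchiyama error, so its positivity is not established either). You flag this obstacle yourself, but flagging it does not resolve it: as written the argument proves neither (a) for $k=1$ nor (b).

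The paper avoids two-sided boundary matching on a sphere altogether, which is why it does not run into this logarithm. For (a) it uses only one-sided maximum-principle comparisons: $P\ge c\bigl(g(\cdot-y)-a_d(k/2)^{2-d}\bigr)$ gives $P\ge ck^{2-d}$ on $\B_{k/4}(y)$; then, writing $P(0)=\sum_{z\in\partial U}P(z)p(z)$ where $U$ is an exact \emph{level set} of the discrete Green function $g$ at radius about $s-k/8$, Lemma~\ref{Greensformula} (the discrete divergence theorem) computes the exit measure $p$ exactly as a normal derivative of $g$, and \eqref{eq:uchiyama} shows this derivative is $\ge c/s^{d-1}$ at a positive fraction of boundary points; multiplying by the $\asymp k^{d-1}$ such points inside $\B_{k/4}(y)$ gives $ck/s^{d-1}$. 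Part (b) repeats this with a level set of scale $m$ around $y$ and finishes with Harnack. The key structural difference is that the level set makes the ``boundary data'' identically constant, so no cancellation between a singular term and an image term has to be verified on the lattice. If you want to salvage your approach, you would need either an exact discrete analogue of the Kelvin identity (which does not exist) or a sharper expansion of $g$ with a signed second-order term; switching to the paper's level-set/flux argument is much less delicate.
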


\begin{proof}
%[\bf Proof of Lemma \ref{lowerbound}]
By the maximum principle,
there is a dimensional constant $c>0$ such that
\[
P(x) \ge c (g(x-y) - a_d (k/2)^{2-d})
\]
for $x\in B_{k/2}(y)$.  In particular,
\[
P(x) \ge c k^{2-d} \quad \mbox{for all } \ |x-y| \le k/4
\]
Now consider the region
\[
U = \{x\in \Grid: g(x) > a_d (s')^{2-d}\}
\]
where $s'$ is chosen so that $|s' - (s-k/8)| < 1/2$ and all of the
boundary points of $U$ are non-lattice points.  (A generic value of $s'$ in the given range will suffice.)

By \eqref{eq:uchiyama}, this set is within unit distance of
the ball of radius $s-k/8$.  Let $p(z)$ represent the
probability that a Brownian motion on the grid starting from
the origin first exits $U$ at $z\in \partial U$.  Thus
\begin{equation}\label{Kakutani}
u(0) = \sum_{z\in \partial U} u(z) p(z)
\end{equation}
for all grid harmonic functions $u$ in $U$.

Take any boundary point of $z\in \partial U$.  Take the nearest lattice
point~$z^*$.  Let $z_j$ be a coordinate of $z$ largest in absolute value.  Then
$|z_j| \ge |z|/d$. The rate of change of $|x|^{2-d}$ in the $j$th
direction near $z$ has size $\ge 1/d|z|^{d-1}$, which is much larger
than the error term $C|z|^{-d}$ in \eqref{eq:uchiyama}.  It follows that on
the segment in that direction, where
the function $g(x) - a_d (s-k/8)^{2-d}$ changes sign, its
derivative is bounded below by $1/2d|z|^{d-1}$.  In other
words, by Lemma \ref{Greensformula}, within distance~$2$ of every boundary point of $z\in \partial U$
there is a point $z'\in \partial U$ for which  $p(z')\ge c/s^{d-1}$.
There are at least $ck^{d-1}$ such points in the ball $\B_{k/4}(y)$
where the lower bound for $P$ was $ck^{2-d}$, so
\[
P(0) \ge  ck^{2-d} k^{d-1} /s^{d-1} = ck/s^{d-1}.
\]

Next, the argument for Lemma \ref{lowerbound}(b) is nearly the same.
We are only interested in $k=1$.  It is obvious that for
points $x$ within constant distance of $y$ (and unit distance
from the boundary at radius $s+1$, the values of $P(x)$ are
bounded below by a positive constant.  We then bound
$P((s-2m)y/|y|)$ from below using the same argument as above,
but with Green's function for a ball of radius comparable to $m$.
Finally, Harnack's inequality says that the values of $P(x)$
for $x$ in the whole ball of size $m$ around this point
$(s-2m)y/|y|$ are comparable.
\end{proof}

\section{Proofs of main lemmas}
\label{sec:earlylate}
The proofs in this section make use of the martingale
	\[ M(t) = M_{y,k}(t) := \sum_{x \in A_{y,k}(t)} (P(x)-P(0)) \]
where $A_{y,k}(t)$ is the modified internal DLA cluster in which particles are stopped if they exit~$\Omega$.  As in \cite{JLS}, we view $A_{y,k}(t)$ as a multiset: points on the boundary of~$\Omega$ where many stopped particles accumulate are counted with multiplicity in the sum defining $M$.  In addition to these stopped particles, the set $A_{y,k}(t)$ contains one more point, the location of the currently active particle performing Brownian motion on the grid $\Grid$.

Recall that $P=P_{y,k}$ and $M=M_{y,k}$ depend on $k$, which is the distance from $y$ to the boundary of~$\Omega$.  We will choose $k=1$ for the proof of Lemma~\ref{earlyimplieslate}, and $k=a\ell$ for a small constant~$a$ in the proof of Lemma~\ref{lateimpliesearly}.  Taking $k>1$ is one of the main differences from the argument in \cite{JLS}.

\begin{proof}[Proof of Lemma \ref{earlyimplieslate}]
The proof follows the same method as \cite[Lemma 12]{JLS}. We highlight here the changes needed in dimensions $d \geq 3$.
We use the discrete harmonic function $P(x)$ with $k=1$.  Fix $z \in \Z^d$, let $r=|z|$ and $y = (r+2m)z/r$.
Let
\[
T_1 = \ceil{\omega_d (r-m)^d}
\]
where $\omega_d$ is the volume of the unit ball in $\R^d$.  If $z$ is $m$-early, then $z\in A(T_1)$; in particuar, this means that $r \ge m$, so that $r+m$, $r+2m$ are all comparable to $r$.
Since $k=1$, we have by Lemmas~\ref{upperbound}(c) and~\ref{lowerbound}(a)
\[
P(0) \approx 1/r^{d-1},
\]
where $\approx$ denotes equivalence up to a constant factor depending only on~$d$.

First we control the quadratic variation
	\[ S(t) = \lim_{\substack{0 = t_0 \leq \ldots \leq t_N = t \\ \max (t_i - t_{i-1}) \to 0}} \; \sum_{i=1}^{N} (M(t_{i})-M(t_{i-1}))^2  \]
on the event $\Early_{m+1}[T]^c$ that there are no $(m+1)$-early points by time $T$.  As in \cite[Lemma~9]{JLS}, there are independent standard Brownian motions $\widetilde{B}^0, \widetilde{B}^1, \ldots$ such that each increment $(S(n+1) - S(n))\one_{\Early_{m+1}[T]^c}$ is bounded above by the first exit time of $\widetilde{B}^n$ from the interval $[-a_n,b_n]$, where
	\begin{align*} a_n &= P(0) \approx \frac{1}{r^{d-1}} \\  b_n &= \max_{|x| \leq (n/\omega_d)^{1/d} + m+1} P(x) \leq \frac{1}{[r+2m - ((n/\omega_d)^{1/d} + m +1)]^{d-1}}.
	\end{align*}
Here we have used Lemma~\ref{upperbound}(b) in the bound on $b_n$.

Unlike in dimension $2$, we will use the large deviation bound for Brownian exit times \cite[Lemma~5]{JLS} with $\lambda=cm^2$ instead of $\lambda=1$.  Here $c$ is a constant depending only on $d$.  Note that $b_n \leq 1/m^{d-1}$, for all $n\leq T_1$, so this is a valid choice of $\lambda$ in all dimensions $d \geq 3$ (that is, the hypothesis
$\sqrt{\lambda}(a_n + b_n)  \leq 3$ of \cite[Lemma~5]{JLS} holds).  We obtain
\begin{align*}
\log \EE \left[ e^{\lambda S(T_1)}1_{\Early_{m+1}[T]^c} \right]
& \le \sum_{n=1}^{T_1} 10 \lambda a_n b_n \\
& \leq \int_{1}^{T_1} \lambda \frac{C}{r^{d-1}}\frac{1}{(r+m - (n/\omega_d)^{1/d} - 1)^{d-1}} dn \\
& \leq \int_{1}^r
\lambda \frac{C}{r^{d-1}} \frac{1}{(r+m - j- 1)^{d-1}} j^{d-1} dj  \\
& \le \int_{1}^r
\frac{C \lambda \, dj}{(r+m - j - 1)^{d-1}}   \le C \lambda/m^{d-2 }.
\end{align*}
Note that the last step uses $d\ge 3$.   Taking $\lambda=cm^2$ for small enough $c$ we obtain
\[
\EE \left[ e^{cm^2 S(T_1)} 1_{\Early_{m+1}[T]^c} \right] \leq e^{m^2/m^{d-2}} \le e^m.
\]
Therefore, by Markov's inequality,
\begin{equation}
\label{eq:firstterm}
\PP( \{ S(T_1) > 1/c \} \cap \Early_{m+1}[T]^c) \le  e^{m-m^2} < T^{-20\gamma}.
\end{equation}
%Note that with $d \ge 3$, it is permissible to use a larger $\lambda$, namely, $\lambda = m^d$ and show that except for probability $e^{-m^2}$, $S(T_1) < C/m^{d-2}$ consistent with the variance bound.
%It happens we don't need to get past bounded variance for our purpose, but we should check whether this is somehow proving too much. I don't think so because there is still the thin tentacles lemma which can't be improved.

Fix $z \in \B_T$ and $t \in \{1,\ldots,T\}$, and let $Q_{z,t}$ be the event that $z \in A(t) \setminus A(t-1)$ and $z$ is $m$-early and no point of $A(t-1)$ is $m$-early.  This event is empty unless $(t/\omega_d)^{1/d} + m \leq |z| \leq (t/\omega_d)^{1/d} + m + 1$; in particular, the first inequality implies $t \leq T_1$.
We will bound from below the martingale $M(t)$ on the event $Q_{z,t} \cap \Late_\ell[T]^c$.  With no $\ell$-late point, the ball $\B_{r-m-\ell-1}$ is entirely filled by time $t$.  Lemma \ref{meanvalue}(b)
shows that the sites in this ball contribute at most a constant to $M(t)$ (recall that $k=1$).
The thin tentacle estimate \cite[Lemma~A]{JLS} says that except for an event of probability $e^{-cm^2}$,
there are order $m^d$ sites in $A(t)$ within the ball $\B(z,m)$.  By Lemma \ref{lowerbound}(b), $P$ is bounded below by $c/m^{d-1}$ on this ball, so
these sites taken together contribute order $m$ to $M(t)$.  Each of the
remaining terms in the sum defining $M(t)$ is bounded below by $-P(0)$, and there are at most $\ell r^{d-1}$ sites in $A(t) \setminus \B_{r-m-\ell-1}$.  So these terms contribute at least
\[
-\ell r^{d-1} (1/r^{d-1}) = -\ell  \ge - m/C
\]
which cannot overcome the order $m$ term.  Thus
	\begin{align}
	\label{eq:secondterm}
	 \PP (Q_{z ,t } \cap \{M_\zeta(t ) < m/C \} \cap \Late_\ell[t ]^c) < e^{-cm^2}.
	 \end{align}
We conclude that
\begin{align*}
\PP(Q_{z ,t} \cap \Late_\ell[T]^c)
&\le \PP (Q_{z ,t }\cap \{S(t ) > 1/c \}) \\
& \qquad+ \PP (Q_{z ,t } \cap \{M(t ) < m/C \} \cap \Late_\ell[t ]^c) \\
& \qquad \qquad + \PP ( \{S(t ) \le 1/c \} \cap \{ M(t ) \ge m/C \}).
\end{align*}
The first two terms are bounded by \eqref{eq:firstterm} and \eqref{eq:secondterm}.  Since $M(t) = B(S(t))$ for a standard Brownian motion~$B$, the final term is bounded by
	\[ \PP \left\{ \sup_{0 \leq s \leq 1/c} B(s) \geq m/C \right\} < e^{-c(m/C)^2/2} < T^{-20\gamma}. \qed \]
\renewcommand{\qedsymbol}{}
\end{proof}

\begin{proof}[Proof of Lemma \ref{lateimpliesearly}]
Fix $y \in \Z^d$, and let $L[y]$ be the event that $y$ is $\ell$-late.  Let $s = |y|$, and set $k=a\ell$ in the definition of $P$.  Here $a>0$ is a small dimensional constant chosen below.  Note that the hypotheses on $m$ and $\ell$ imply that $\ell$ is at least of order $\sqrt{\log T}$; after choosing~$a$, we take the constant $C_1$ appearing in the statement of the lemma large enough so that $k^2 > 1000 \gamma \log T$.

\medskip
Case 1.  $1 \le s \le 2k$.  Then $P(0) \approx 1/s^{d-2}$.  Let
\[
T_1 = \floor{\omega_d (s + \ell)^d}
\]
%(Note that $s+100k$ is not the radius of the disk we are using for the stopping time; that disk has radius $s+k$. )
With $a_n = P(0)$ and $b_n=1$, we have $S(n+1)-S(n) \leq \tau_n$, where $\tau_n$ is the first exit time of the Brownian motion $\widetilde{B}^n$ from the interval $[-a_n,b_n]$. (Note that because we take $b_n=1$, the indicator $\one_{\Early_{m+1}[T]^c}$ is not needed here as it was in the proof of Lemma~\ref{earlyimplieslate}.) We obtain
\[
\log\EE e^{S(T_1)} \leq \sum_{t=1}^{T_1} \log \EE e^{\tau_n} \leq T_1 P(0).
\]
%Note also that we did not need any no $m$-early hypothesis to get this bound.  (This ultimately implies we don't need the LGB bound to get started and our proof is entirely self-contained.)
Let $Q = T_1 P(0)$.  By Markov's inequality, $\PP(S(T_1) > 2Q) \leq e^{-Q}$.

On the event $L[y]$, the site $y$ is still not occupied at time $T_1$.  Accordingly, the largest $M(T_1)$ can be is if $A_{y,k}(T_1)$ fills the whole ball $\B_{s+k}$ (except for $y$), and then the rest of the particles will have to collect on the boundary where $P$ is zero.  The contribution from $\B_{s+k}$ is at most $Ck^2$ by Lemma~\ref{meanvalue}(c).  The number of particles stopped on the boundary is at least
\[
T_1 - 2\omega_d(s+k)^d  \ge \frac{T_1}{2}.
\]
Therefore, on the event $L[y]$ we have
\begin{equation}
\label{eq:verynegative}
M (T_1) \le  Ck^2 - \frac{T_1}{2}P(0).
\end{equation}
Note that $Q: = T_1 P(0)  \approx (s+\ell)^d / s^{d-2} \geq \ell^d / (k/2)^{d-2}$,
%$ = a^{-d} k^2$,
so by taking~$a=k/\ell$ sufficiently small, we can ensure that the right side of \eqref{eq:verynegative} is at most $-Q/4$.  Also, $Q \geq \ell^2 \geq 1000 \gamma \log T$.  Since $M(t) = B(S(t))$ for a standard Brownian motion~$B$, we conclude that
\begin{align*}
\PP(L[y]) &\leq \PP(S(T_1)>2Q) + \PP \left\{ \inf_{0 \leq s \leq 2Q} B(s) \leq -Q/4 \right\} \\
	&\leq e^{-Q} + e^{-(Q/4)^2/4Q} \\
	& < T^{-20\gamma}.
\end{align*}

\bigskip

Case 2.  $s \geq 2k$.  Then by Lemma~\ref{upperbound}(c) with $r=1$, and Lemma~\ref{lowerbound}(a), we have $P(0) \approx k/s^{d-1}$.  First take
\[
T_0 = \floor{\omega_d(s+k -3m)^d}
\]
(or $T_0=0$ if $s+k-3m \le 0$).  As in the previous lemma (but taking $\lambda=1$ instead of $\lambda = cm^2$) we have
\[
\log \EE \left[ e^{S(T_0)} 1_{\Early_m[T]^c} \right] \leq C \frac{k}{s^{d-1}} \int_0^{T_0} \frac{d n}{ \bigl( s+k-(n/\omega_d)^{1/d}\bigr)^{d-1}} \leq C k/m^{d-2} \leq C.
\]
The last inequality follows from $d \geq 3$ and $m \geq k/a$.  By Markov's inequality,
\[
\PP ( \{S(T_0) > C + k^2\} \cap \Early_m[T]^c ) <  e^{-k^2} < T^{-20\gamma}.
\]
Now since
\[
(T_1 - T_0) P(0) \approx m s^{d-1} (k/s^{d-1}) = km
\]
we have
\[
\log\EE e^{S(T_1)-S(T_0)} \le Ckm.
\]
Thus (since $km \ge k^2$)
\begin{equation}
\label{eq:shorttime}
\PP ( \{ S(T_1) > 2Ckm \} \cap \Early_m[T]^c) < 2 T^{-20\gamma}.
\end{equation}

As in case 1, the martingale $M(T_1)$ is largest if the ball $\B_{s+k}$ is completely filled, and in that case the total contribution of sites in this ball is at most $Ck^2$.  On the event $L[y]$, the number of particles stopped on the boundary of $\Omega$ at time $T_1$ is at least
	\[ T_1 - \# \B_{s+k} \geq \omega_d ((s+\ell)^d - (s+k+C)^d) \approx \ell s^{d-1}. \]
Each such particle contributes $-P(0) \approx -k/s^{d-1}$ to $M(T_1)$, for a total contribution of order $-k\ell = -k^2 / a$.  Taking $a$ sufficiently small we obtain $M(T_1) \leq Ck^2 - k^2 / a \leq - k^2$.
We conclude that
	\begin{align*} \PP( L[y] \cap \Early_m[T]^c ) &\leq \PP( \{ S(T_1) > 2Ckm \} \cap \Early_m[T]^c) + \\
	&\qquad + \PP( \{ S(T_1) \leq 2Ckm \} \cap \{ M(T_1) \leq - k^2 \} ).
	\end{align*}
The first term is bounded above by \eqref{eq:shorttime}, and the second term is bounded above by
	\[ \PP \left\{ \inf_{s \leq 2Ckm} B(s) \leq -k^2 \right\} \leq e^{-k^4 / 4Ckm} < T^{-20\gamma}. \]
%In the last step we have used the lower bound on $\ell$, which gives $k^3 / m \geq a^3 C_1^3 \log T$.
Hence $ \PP( L[y] \cap \Early_m[T]^c ) < 3T^{-20\gamma}$.
Since $\Late_\ell[T]$ is the union of the events $L[y]$ for $y \in \mathcal{B} := \B_{(T/\omega_d)^{1/d}-\ell}$, summing over $y \in \mathcal{B}$ completes the proof.
\end{proof}

\end{document}